\theoremstyle{definition}
\newtheorem{defi}{Definition}[section]
\newtheorem{``defi''}{``Definition''}[section]
\theoremstyle{remark}
\newtheorem{exam}[defi]{Example}
\newtheorem{exams}[defi]{Examples}
\theoremstyle{plain}
\newtheorem{thm}[defi]{Theorem}
\newtheorem{lem}[defi]{Lemma}
\newtheorem{cor}[defi]{Corollary}
\newtheorem{rem}[defi]{Remark}
\newcommand{\Z}{\mathbb{Z}}
\newcommand{\N}{\mathbb{N}}
\begin{document}

\title{Examples of $*$-commuting maps}
\author[B. Maloney and P. N. Willis]{Ben Maloney and Paulette N. Willis}

\address{Department of Mathematics \\ University of Houston \\ Houston, TX 77204-3008 \\ USA}
\email{pnwillis@math.uh.edu}

\thanks{This research was partially supported by NSF Mathematical Sciences Postdoctoral Fellowship DMS-1004675, the University of Iowa Graduate College Fellowship as part of the Sloan Foundation Graduate Scholarship Program, and the University of Iowa Department of Mathematics NSF VIGRE grant DMS-0602242.}

\date{\today}

\subjclass[2010]{37B10, 37B15}

\keywords{$k$-graph, shift map, $*$-commute}

\begin{abstract}
We introduce the concept of a $1$-coaligned $k$-graph and prove that the shift maps of a $k$-graph pairwise $*$-commute if and only if the $k$-graph is $1$-coaligned.  We then prove that for $2$-graphs $\Lambda$ generated from basic data $*$-commuting shift maps is equivalent to a condition that implies that $C^*(\Lambda)$ is simple and purely infinite. We then consider full shift spaces and introduce a condition on a block map which ensures the associated sliding block code $*$-commutes with the shift.
\end{abstract}

\numberwithin{equation}{section}
\maketitle

\section{Introduction}
Suppose $X$ is a set and $S,T: X \to X$  are commuting functions.  We say that $S$ and $T$ $*$-commute if for every $(y,z) \in X \times X$ such that $S(y)=T(z)$, there exists a unique $x \in X$ such that $T(x)=y$ and $S(x)=z$.  The concept of $*$-commuting maps was first introduced in \cite[\S 5]{AR}, where Arzumanian and Renault studied $*$-commuting pairs of local homeomorphisms on a compact space $X$.  In \cite{ER07} Exel and Renault expand on this idea and provide many interesting examples.  Additional examples of $*$-commuting maps are in \cite{pW10}.

Higher-rank graphs (or $k$-graphs) $\Lambda$ were introduced by Kumjian and Pask \cite{KP00} to provide combinatorial models for the higher-rank Cuntz-Krieger $C^*$-algebras of Robertson and Steger \cite{RS99}.  In Section~\ref{kgraph} we prove that the shift maps on the infinite path space of $\Lambda$ pairwise $*$-commute if and only if the $k$-graph is $1$-coaligned in the sense that for each pair of edges $(e,f)$ with the same source there exists a pair of edges $(g,h)$ such that $ge=hf$.  This equivalence is purely set-theoretic: the maps are not required to be continuous in any sense.

In Section~\ref{BD2graph} we restrict ourselves to the $2$-graphs discussed in \cite{PRW09}, which model higher-dimensional subshifts.  The motivating example in \cite{PRW09} is a subshift introduced by Ledrappier \cite{Led78}, which is also one of Exel and Renault's main examples in \cite{ER07}.  In \cite[\S 11]{ER07} Exel and Renault prove that the 2 shift maps on Ledrappier's dynamical system $*$-commute.  We discover conditions on a graph from \cite{PRW09} which ensure that graph is $1$-coaligned. These conditions also ensure that $C^*(\Lambda)$ is simple and purely infinite.

In Section~\ref{FSS} we consider full shift spaces.  Morphisms between shift spaces are called sliding block codes, and any such morphism $\tau_d$ is built from a block map $d : A^n \to A$  (see \cite{Hed69, LM95}).   We give a condition on the block map which ensures that the sliding block code and the shift $*$-commute.

\section{$k$-graphs whose shifts $*$-commute} \label{kgraph}

A \emph{k-graph} is a pair $(\Lambda, d)$ consisting of a countable category $\Lambda$ and a functor $d: \Lambda \to \mathbb{N}^k$, called the \emph{degree map}, satisfying the \emph{factorization property}: for every $\lambda \in \Lambda$ and $m,n \in \mathbb{N}^k$ with $d(\lambda)= m+n$, there are unique elements $\mu, \nu \in \Lambda$ such that $d(\mu)=m$, $d(\nu)=n$ and $\lambda= \mu \nu$.  For $k\geq 1$, $\Omega_{k}$ is a category with unit space $\Omega_{k}^{0}=\mathbb{N}^{k}$, morphism space $\Omega_{k}^{*}=\{(m,n)\in\mathbb{N}^{k}\times\mathbb{N}^{k}:m\leq n\}$, range map $r(m,n)=m$, and source map $s(m,n)=n$. Let $d:\Omega\rightarrow\mathbb{N}^{k}$ be defined by $d(m,n)=m-n$, then $(\Omega_{k},d)$ is a $k$-graph, which we denote by $\Omega_{k}$.  A morphism between $k$-graphs $(\Lambda_1, d_1)$ and $(\Lambda_2, d_2)$ is a functor $f:\Lambda_1 \to \Lambda_2$ compatible with the degree maps.  For a $k$-graph $(\Lambda,d)$ 
\[
\Lambda^{\infty}:=\{x:\Omega_{k}\rightarrow\Lambda:x\mbox{ is a }k\mbox{-graph morphism}\}
\] 
is the \textit{infinite path space} of $\Lambda$ and for $n \in \mathbb{N}^k$ $\Lambda^n$ denotes the set of all paths of degree $n$.

We shall use $\mathbf{0}$ to denote the zero vector in $\mathbb{N}^k$.  We denote sections of these paths with range $m\in\mathbb{N}^{k}$ and source $n\in\mathbb{N}^{k}$ by $x(m,n)$.  The paths $x(m,m)$ have degree $\mathbf{0}$ hence are vertices.  In the literature it is common to write $x(m):=x(m,m)$, but as in \cite{PRW09} we refrain from doing this as $x(m)$ has another meaning.  For $p\in\mathbb{N}^{k}$, we define the map $\sigma^{p}:\Lambda^{\infty}\rightarrow\Lambda^{\infty}$ by $\sigma^{p}(x)(m,n)=x(m+p,n+p)$ for $(m,n)\in\Omega_{k}^{*}$.  Observe that $\sigma^p \sigma^q= \sigma^{p+q}$.

\begin{rem} \label{Props-Lambda-infty}
In this remark we state properties of $\Lambda^{\infty}$ that will be useful in this paper.  These properties have been discussed in several papers including \cite{KP00, PRW09}.  
\begin{enumerate}
    \item \label{P1} For all $\lambda\in\Lambda$ and $x\in\Lambda^{\infty}$ with $x(\mathbf{0}, \mathbf{0})=s(\lambda)$, there is a unique $\lambda x$ such that $x=\sigma^{d(\lambda)}(\lambda x)$ and $\lambda= (\lambda x)(\mathbf{0},d(\lambda))$. (\cite[Proposition 2.3]{KP00})
    \item  \label{P2} For every $x \in \Lambda^{\infty}$ and $p\in\mathbb{N}^{k}$, $x=x(\mathbf{0},p)\sigma^{p}x$. (\cite[Proposition 2.3]{KP00})
    \item \label{P3} For all $v \in \Lambda^0$ there exists $x \in \Lambda^{\infty}$ with $r(x)=v$.  This condition says that we assume the $k$-graph has no sources. (\cite[Definition 1.4]{KP00})
    \item \label{P4} For all $\lambda, \mu \in \Lambda$ and $x\in\Lambda^{\infty}$ such that $r(x)=s(\mu)$ and $r(\mu)=s(\lambda)$ we have $\lambda(\mu x)= (\lambda \mu) x$.
  \end{enumerate}
\end{rem}

\begin{defi}
A $k$-graph $\Lambda$ is $1$-\textit{coaligned} if for all $1 \leq i,j \leq k$ such that $i \neq j$, and $(e^i,e^j)\in\Lambda^{e_i}\times\Lambda^{e_j}$ with $s(e^i)=s(e^j)$ there exists a unique pair $(f^i,f^j)\in\Lambda^{e_i}\times\Lambda^{e_j}$ such that $f^ie^j=f^je^i$.
\end{defi}

\begin{thm} \label{thm:SC-classify}
Suppose $(\Lambda,d)$ is a $k$-graph with no sources, and $\sigma^{e_{i}}:\Lambda^{\infty}\rightarrow\Lambda^{\infty}$ is the one-sided shift in the $e_i$ direction.  The maps $\sigma^{e_i},\sigma^{e_j}$ $*$-commute for every $i \neq j$  if and only if $(\Lambda,d)$ is $1$-coaligned.
\end{thm}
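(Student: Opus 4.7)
The plan is to prove both implications by exploiting the factorization property to reduce the statement about infinite paths to a statement about degree-$(e_i+e_j)$ segments, which is exactly the $1$-coaligned condition. The core observation is that by property~\eqref{P2}, any infinite path $x$ decomposes as $x=x(\mathbf{0},e_i+e_j)\,\sigma^{e_i+e_j}x$, and by the factorization property the prefix $x(\mathbf{0},e_i+e_j)$ factors uniquely in two ways as $x(\mathbf{0},e_i)\,x(e_i,e_i+e_j)$ and $x(\mathbf{0},e_j)\,x(e_j,e_i+e_j)$. Unwinding the shift maps, one checks that $\sigma^{e_j}x$ begins with $x(e_j,e_j+e_i)$ and $\sigma^{e_i}x$ begins with $x(e_i,e_i+e_j)$, so specifying $\sigma^{e_i}x$ and $\sigma^{e_j}x$ is the same as specifying the tail $\sigma^{e_i+e_j}x$ together with a coaligned square.

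For the backward direction, assume $\Lambda$ is $1$-coaligned and let $(y,z)\in\Lambda^\infty\times\Lambda^\infty$ satisfy $\sigma^{e_i}(y)=\sigma^{e_j}(z)=:w$. By~\eqref{P2} write $y=e^i w$ with $e^i=y(\mathbf{0},e_i)$, and $z=e^j w$ with $e^j=z(\mathbf{0},e_j)$; note $s(e^i)=s(e^j)=r(w)$. The $1$-coaligned hypothesis yields a unique pair $(f^i,f^j)$ with $f^ie^j=f^je^i$, and I set $x:=(f^ie^j)w$, which is legitimate because $s(f^ie^j)=s(e^j)=r(w)$. Using the factorization property together with~\eqref{P4} I would verify directly that $\sigma^{e_j}x=e^iw=y$ and $\sigma^{e_i}x=e^jw=z$. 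For uniqueness, any other $x'$ with $\sigma^{e_j}x'=y$, $\sigma^{e_i}x'=z$ satisfies $\sigma^{e_i+e_j}x'=w$, and reading off $x'(\mathbf{0},e_i)$ and $x'(\mathbf{0},e_j)$ produces another coaligning pair for $(e^i,e^j)$; the uniqueness clause in the $1$-coaligned definition forces $x'=x$.

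For the forward direction, given edges $e^i,e^j$ with $s(e^i)=s(e^j)=v$, use~\eqref{P3} to pick $u\in\Lambda^\infty$ with $r(u)=v$ and set $y:=e^iu$, $z:=e^ju$, so that $\sigma^{e_i}y=u=\sigma^{e_j}z$. The $*$-commuting hypothesis yields a unique $x\in\Lambda^\infty$ with $\sigma^{e_j}x=y$ and $\sigma^{e_i}x=z$; define $f^i:=x(\mathbf{0},e_i)$ and $f^j:=x(\mathbf{0},e_j)$. The two factorizations of $x(\mathbf{0},e_i+e_j)$ combined with the identities above give $f^ie^j=x(\mathbf{0},e_i+e_j)=f^je^i$, settling existence. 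For uniqueness of the coaligning pair, if $(g^i,g^j)$ is another such pair, then $x':=(g^ie^j)u$ also satisfies $\sigma^{e_j}x'=y$, $\sigma^{e_i}x'=z$, so $x'=x$ by the $*$-commuting hypothesis; comparing degree-$(e_i+e_j)$ prefixes and applying the factorization property gives $g^i=f^i$ and $g^j=f^j$.

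I expect the main obstacle to be purely bookkeeping: carefully tracking which sections of which paths are being compared, and invoking the factorization property at the right place so that the identification of the relevant edges with $e^i$, $e^j$, $f^i$, $f^j$ is unambiguous. No analytic input is needed beyond properties~\eqref{P1}--\eqref{P4}, which is consistent with the theorem's assertion that the equivalence is purely set-theoretic.
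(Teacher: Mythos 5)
Your proposal is correct and follows essentially the same route as the paper's proof: both directions hinge on the decomposition $y=y(\mathbf{0},e_i)\,\sigma^{e_i}(y)$ from Remark~\ref{Props-Lambda-infty}~\eqref{P2}, constructing $x=f^ie^jw=f^je^iw$ in one direction and reading off the coaligning pair from the prefixes of the unique preimage $x$ in the other. The only difference is that you spell out the uniqueness of $x$ in the backward direction (via comparing $x'(\mathbf{0},e_i)$ and $x'(\mathbf{0},e_j)$ against $(f^i,f^j)$), which the paper leaves as a one-line remark; your version fills that in correctly.
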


\begin{proof}
Suppose $\sigma^{e_i},\sigma^{e_j}$ $*$-commute for $i \neq j$ and fix $(e^i,e^j)\in\Lambda^{e_i}\times\Lambda^{e_j}$ with $s(e^i)=s(e^j)$. By Remark~\ref{Props-Lambda-infty}~\eqref{P3} there exists $w \in \Lambda^{\infty}$ with $r(w)= s(e^i)$.  Define $y:=e^iw$ and $z:=e^jw$. Observe that by Remark~\ref{Props-Lambda-infty}~\eqref{P1} $\sigma^{e_i}(y)=w=\sigma^{e_j}(z)$. Since $\sigma^{e_i}$ and $\sigma^{e_j}$ $*$-commute there exists a unique $x\in\Lambda^{\infty}$ such that $\sigma^{e_i}(x)=z$ and $\sigma^{e_j}(x)=y$. Define $f^i:=x(\mathbf{0},e_i)$ and $f^j:=x(\mathbf{0},e_j)$. So Remark~\ref{Props-Lambda-infty}~\eqref{P2} gives us 
\begin{equation*}
x=x(\mathbf{0},e_i)\sigma^{e_i}(x)=f^iz=f^ie^jw\mbox{ and }x=x(\mathbf{0},e_j)\sigma^{e_j}(x)=f^jy=f^je^iw.
\end{equation*}
Hence $(f^i,f^j)\in\Lambda^{e_i}\times\Lambda^{e_j}$ satisfies $f^ie^j=f^je^i$.  To demonstrate uniqueness suppose there exists a pair $(g^i,g^j)\in\Lambda^{e_i}\times\Lambda^{e_j}$ such that $g^ie^j=g^je^i$. Then $g^ie^jw=g^je^iw$ and $\sigma^{e_i}(g^ie^jw)=e^jw=z$ and $\sigma^{e_j}(g^je^iw)=e^iw=y$. Since $\sigma^{e_i},\sigma^{e_j}$ $*$-commute, $g^ie^jw=x=g^je^iw$. By Remark~\ref{Props-Lambda-infty}~\eqref{P1}, $g^i=x(\mathbf{0},e_i)=f^i$ and $g^j=x(\mathbf{0},e_j)=f^j$.

Suppose that $(\Lambda, d)$ is $1$-coaligned. We see that $\sigma^{e_{i}}\sigma^{e_{j}}=\sigma^{e_{j}}\sigma^{e_{i}}$ follows from the fact that $\sigma^p \sigma^q= \sigma^{p+q}$ and $\mathbb{N}^k$ is commutative. Suppose $y,z\in\Lambda^{\infty}$ such that $\sigma^{e_i}(y)=\sigma^{e_j}(z)=w$, say. Define $e^i := y(\mathbf{0},e_i)$ and $e^j := z(\mathbf{0},e_j)$, which is a pair in $\Lambda^{e_i}\times\Lambda^{e_j}$.  By Remark~\ref{Props-Lambda-infty}~\eqref{P2} we have
\begin{equation*}
y=y(\mathbf{0}, e_i) \sigma^{e_i}(y)= e^i w \mbox { and } z=z(\mathbf{0}, e_j) \sigma^{e_j}(z)= e^j w.
\end{equation*}
Therefore $s(e^i)=r(w)=s(e^j)$.  Since $(\Lambda, d)$ is $1$-coaligned there exists a unique pair $(f^i,f^j) \in \Lambda^{e_i} \times \Lambda^{e_j}$ such that $f^ie^j=f^je^i$. Define $x:=f^je^iw$ (which equivalently equals $f^ie^jw$), then $\sigma^{e_i}(x)=\sigma^{e_i}(f^ie^jw)=e^jw=z$ and $\sigma^{e_j}(x)=\sigma^{e_j}(f^je^iw)=e^iw=y$. The uniqueness of $x$ follows from the uniqueness of the pair $(f^i, f^j)$.  Hence $\sigma^{e_i},\sigma^{e_j}$ *-commute.
\end{proof}

Observe that Theorem~\ref{thm:SC-classify} is independent of the topology of the path space.  Pairwise $*$-commuting shifts being equivalent to the $k$-graph being $1$-coaligned is a set-theoretic property.   Next we discuss the usual topology put on the path space of a $k$-graph and discuss our results in terms of that topology.

For a $k$-graph $\Lambda$ define 
\[ Z(\lambda)=\{x\in\Lambda^{\infty}:x(\mathbf{0},d(\lambda))=\lambda\} \]
called the \emph{cylinder set of $\lambda$}.   The sets $Z(\lambda)$ are compact and form a subbasis for a locally compact Hausdorff topology on $\Lambda^{\infty}$ (see \cite[Proposition 2.8]{KP00}).

\begin{cor}
If $\Lambda$ is a $1$-coaligned row-finite $k$-graph with no sinks or sources then for each $i \neq j$, $\sigma^{e_i}$ and $\sigma^{e_j}$ are $*$-commuting surjective local homeomorphisms.
\end{cor}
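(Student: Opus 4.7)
The plan is to read off the $*$-commuting conclusion directly from Theorem~\ref{thm:SC-classify}, and then handle surjectivity and the local homeomorphism property by hand using the standard cylinder set topology. Since $\Lambda$ is $1$-coaligned, Theorem~\ref{thm:SC-classify} immediately gives that $\sigma^{e_i}$ and $\sigma^{e_j}$ $*$-commute for all $i\neq j$. For surjectivity of $\sigma^{e_i}$: given any $y\in\Lambda^{\infty}$, the no-sinks hypothesis supplies an edge $e\in\Lambda^{e_i}$ with $s(e)=r(y)=y(\mathbf{0},\mathbf{0})$, and then $ey\in\Lambda^{\infty}$ produced via Remark~\ref{Props-Lambda-infty}\eqref{P1} satisfies $\sigma^{e_i}(ey)=y$.

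For the local homeomorphism property, I would exhibit, for each $x\in\Lambda^{\infty}$, an explicit open neighbourhood on which $\sigma^{e_i}$ restricts to a homeomorphism onto an open set. The natural candidate is $Z(e)$ with $e:=x(\mathbf{0},e_i)$. By Remark~\ref{Props-Lambda-infty}\eqref{P2} every $z\in Z(e)$ factors as $z=e\,\sigma^{e_i}(z)$, so $\sigma^{e_i}$ maps $Z(e)$ bijectively onto $Z(s(e))$, with explicit inverse $y\mapsto ey$; both $Z(e)$ and $Z(s(e))$ are open.

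It remains to verify that both $\sigma^{e_i}$ and the local inverse $y\mapsto ey$ are continuous. For $\sigma^{e_i}$, the preimage of a subbasic cylinder $Z(\mu)$ is $\bigcup_{f\in\Lambda^{e_i}r(\mu)} Z(f\mu)$, which is open; row-finiteness ensures that this union is in fact finite. For the inverse $y\mapsto ey$ on $Z(s(e))$, the factorization property shows that the preimage of $Z(\lambda)$ is empty unless $\lambda$ begins with $e$, in which case $\lambda=e\lambda'$ and the preimage is $Z(\lambda')$, again open. The main ``obstacle'' is really just bookkeeping with the factorization property and identifying which cylinder sets appear under these preimages; no conceptually new ingredient is needed beyond Theorem~\ref{thm:SC-classify}, the no-sinks hypothesis, and the two properties \eqref{P1}--\eqref{P2} of Remark~\ref{Props-Lambda-infty}.
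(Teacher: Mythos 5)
Your proposal is correct and follows essentially the same route as the paper: $*$-commuting from Theorem~\ref{thm:SC-classify}, surjectivity from the no-sinks hypothesis together with Remark~\ref{Props-Lambda-infty}\eqref{P1}, continuity via the identification of $(\sigma^{e_i})^{-1}Z(\mu)$ as a union of cylinder sets, and the local homeomorphism property by restricting to $Z(x(\mathbf{0},e_i))\to Z(s(x(\mathbf{0},e_i)))$. You merely spell out the inverse map $y\mapsto ey$ and its continuity, which the paper leaves as an exercise.
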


\begin{proof}
By Theorem~\ref{thm:SC-classify} $\sigma^{e_i}$ and $\sigma^{e_j}$ $*$-commute.  To see that $\sigma^{e_i}$ is surjective fix $y\in\Lambda^{\infty}$.  Since $\Lambda$ has no sinks, there exists $e\in\Lambda^{e_{i}}r(y)$ such that $s(e)=r(y)$.  By Remark~\ref{Props-Lambda-infty}~\eqref{P1} $y= \sigma^{e_i}(ey)$, therefore $\sigma^{e_i}$ is surjective.  One can show that $\sigma^{e_i}$ is continuous by showing $(\sigma^{e_{i}})^{-1}Z(\lambda)=\bigsqcup_{e\in\Lambda^{e_{i}}r(\lambda)}Z(e\lambda)$.  We see that $\sigma^{e_i}$ is a local homeomorphism by showing that for $x \in \Lambda^{\infty}$ $\sigma^{e_{i}}:Z(x(\mathbf{0}, e_i))\rightarrow Z(s(x(\mathbf{0}, e_i)))$  is a homeomorphism.
\end{proof}

\section{$2$-graphs from Basic Data} \label{BD2graph}

We begin this section by briefly reviewing the process of constructing a $2$-graph from basic data (see \cite[\S 3]{PRW09}).  A subset $T$ of $\N^2$ is \emph{hereditary} if for $j\in T$, each $i$ such that $\mathbf{0} \leq i\leq j$, $i\in T$.  There are four variables that make up \emph{basic data}:
\begin{itemize}
\item a finite hereditary subset of $\N^2$ called the \emph{tile} denoted $T$,
\item an \emph{alphabet} $\{0,\ldots,q-1\}$ identified with $\mathbb{Z}/q \mathbb{Z}$,
\item an element $t$ of the alphabet called the \emph{trace},
\item a weight function $w:T\rightarrow\{0,\ldots,q-1\}$ called the \emph{rule}.
\end{itemize}  
The vertex set of $\Lambda(T,q,t,w)$ is
\begin{equation}
\label{eq:trace}
\Lambda^{0}:= \Big \{v:T\rightarrow\mathbb{Z}/q \mathbb{Z} \Big| \sum_{i\in T}w(i)v(i)\equiv t\pmod{q} \Big \}.
\end{equation}

For $S\subset\Z^2$ and $n\in\Z^2$, define the translate of $S$ by $n$ by $S+n:=\{i+n:i\in S\}$. Set $T(n):=\bigcup_{\mathbf{0} \leq m\leq n}T+m$. If $f:S\rightarrow \mathbb{Z}/q \mathbb{Z}$ is defined on a subset $S$ of $\N^2$ containing $T+n$, then we define $f|_{T+n}:T\rightarrow\mathbb{Z}/q \mathbb{Z}$ by $f_{T+n}(i)=f(i+n)$ for $i\in T$. A \textit{path of degree $n$} is a function $\lambda:T(n)\rightarrow\mathbb{Z}/q \mathbb{Z}$ such that $\lambda|_{T+m}= \lambda(m,m)$ is a vertex for $\mathbf{0} \leq m\leq n$, with source $s(\lambda)=\lambda|_{T+n}$ and range $r(\lambda)=\lambda|_{T}$.  For $\lambda \in \Lambda^p$ and $\mathbf{0} \leq m \leq n \leq p$, the segment $\lambda(m,n)$ is the path of degree $n-m$ defined by 
\[  \lambda(m,n)(i)= \lambda(m+i) \mbox { for } i \in T(n-m). \]

Suppose $w$ has invertible corners, in the sense that for $(c_1,c_2):=\bigvee\{i:i\in T\}$ $w(c_1e_1)$ and $w(c_2e_2)$ are invertible elements of $\mathbb{Z}/q \mathbb{Z}$, and $\mu \in \Lambda^m$, $\nu  \in \Lambda^n$ such that $s(\mu)=r(\nu)$.  Then there is a unique path $\lambda  \in \Lambda^{m+n}$ such that $\lambda(\mathbf{0}, m)=\mu$ and $\lambda(m, m+n)= \nu$ (\cite[Proposition 3.2]{PRW09}).  By defining composition using the unique path $\lambda$ and defining a degree map Pask, Raeburn and Weaver \cite[Theorem 3.4]{PRW09} prove that there exists a unique 2-graph which we denote $\Lambda(T,q,t,w)$.  We assume that $c_1, c_2 > 0$.


\begin{thm}
\label{thm:coalign-invert}
Suppose $(T,q,t,w)$ is basic data with invertible corners. Then $\Lambda(T,q,t,w)$ is $1$-coaligned if and only if $w(\mathbf{0})$ is invertible in $\mathbb{Z}/q \mathbb{Z}$.
\end{thm}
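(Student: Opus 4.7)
The plan is to reduce the $1$-coalignment condition for $\Lambda(T,q,t,w)$ to a single linear congruence modulo $q$ whose leading coefficient is $w(\mathbf{0})$. By the factorization property and the identifications $f^i=\lambda(\mathbf{0},e_i)$, producing a pair $(f^1,f^2)\in\Lambda^{e_1}\times\Lambda^{e_2}$ with $f^1e^2=f^2e^1$ is equivalent to producing a path $\lambda\in\Lambda^{e_1+e_2}$ with $\lambda(e_1,e_1+e_2)=e^2$ and $\lambda(e_2,e_1+e_2)=e^1$, and uniqueness of one corresponds to uniqueness of the other. So the content of $1$-coalignment becomes: for every $(e^1,e^2)$ with $s(e^1)=s(e^2)$, such a $\lambda$ exists and is unique.

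Viewing $\lambda$ as a function on $T(e_1+e_2)=T\cup(T+e_1)\cup(T+e_2)\cup(T+e_1+e_2)$, the two prescribed segments fix $\lambda$ on $(T+e_1)\cup(T+e_2)\cup(T+e_1+e_2)$; the overlap of the two prescriptions is precisely $T+e_1+e_2$, and consistency there reduces exactly to $s(e^1)=s(e^2)$. Because $T$ is hereditary and $c_1,c_2>0$, every $i=(a,b)\in T\setminus\{\mathbf{0}\}$ has $a\geq 1$ or $b\geq 1$, so $i-e_1\in T$ or $i-e_2\in T$, and hence $i\in(T+e_1)\cup(T+e_2)$. Thus $\lambda(\mathbf{0})$ is the only value not already determined by $(e^1,e^2)$. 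The vertex conditions at $T+e_1$, $T+e_2$ and $T+e_1+e_2$ are automatic, since the corresponding restrictions of $\lambda$ equal $r(e^2)$, $r(e^1)$ and $s(e^1)=s(e^2)$, so the only remaining constraint is that $\lambda|_T$ be a vertex, which collapses to the single congruence
\[
w(\mathbf{0})\lambda(\mathbf{0})\;\equiv\;t-\sum_{i\in T\setminus\{\mathbf{0}\}} w(i)\lambda(i)\pmod{q}.
\]

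If $w(\mathbf{0})$ is invertible in $\mathbb{Z}/q\mathbb{Z}$, this congruence has a unique solution for any right-hand side, so the required $(f^1,f^2)$ exists and is unique, giving $1$-coalignment. Conversely, if $w(\mathbf{0})$ is not invertible then $\gcd(w(\mathbf{0}),q)\geq 2$, so the congruence has either no solutions or at least two solutions; for any valid pair $(e^1,e^2)$ with common source---such a pair exists thanks to the invertible corners hypothesis, which produces enough vertices and edges with prescribed sources---existence or uniqueness of $(f^1,f^2)$ fails. The main obstacle I anticipate is the combinatorial bookkeeping in the middle step: pinpointing $\{\mathbf{0}\}$ as the only uncovered point of $T$, verifying that the overlap consistencies really do follow from $s(e^1)=s(e^2)$ alone, and recognizing that the outer vertex conditions come for free so that the congruence above is the sole nontrivial constraint.
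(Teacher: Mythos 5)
Your proposal is correct and follows essentially the same route as the paper: the paper packages your "combinatorial bookkeeping" step into Lemma~\ref{Lem-for-Thm}, which shows exactly that the two prescribed segments determine $\lambda$ on $T(e_1+e_2)\setminus\{\mathbf{0}\}$ (with consistency on the overlap $T+e_1+e_2$ coming from $s(e^b)=s(e^r)$), that the outer vertex conditions are automatic, and that the sole remaining constraint is the congruence $w(\mathbf{0})\lambda(\mathbf{0})\equiv t-\sum_{i\in T\setminus\{\mathbf{0}\}}w(i)\lambda(i)\pmod q$. The case analysis on invertibility of $w(\mathbf{0})$ (unique solution versus no solution or at least two solutions) then matches the paper's argument exactly.
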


\begin{lem} \label{Lem-for-Thm}
Suppose $e^b \in \Lambda^{e_1}$ and  $e^r \in \Lambda^{e_2}$ are edges in $\Lambda(T,q,t,w)$ such that $s(e^b)=s(e^r)$.
\begin{enumerate}
  \item \label{Lem-unique} If $\mu$ is a path with $d(\mu)=e_1 + e_2$ such that $\mu (e_1, e_1 + e_2)= e^r$ and $\mu (e_2, e_1 + e_2)= e^b$, then
\begin{equation}\label{eq:mu-path}
           \mu(i)=\begin{cases}
            e^r(i-e_1) & \mbox{if } i \in (T+ e_1) \cup (T + e_1 +e_2)\\
            e^b(i-e_2) & \mbox{if } i \in (T+ e_2)  \cup (T + e_1 +e_2)
           \end{cases}
            \end{equation}
        and $\mu(\mathbf{0})$ satisfies 
        \begin{equation} 
        \label{eq:mu-zero}
w(\mathbf{0}) \mu(\mathbf{0})= t-\sum_{i\in T\backslash \{ \mathbf{0}\}}w(i) \mu(i).
        \end{equation}
   \item \label{Lem-exist} There is a well-defined function $\lambda: T(e_1+e_2)\backslash \{\mathbf{0}\} \to \mathbb{Z}/q \mathbb{Z}$ such that
        \begin{equation}
\label{eq:lambda-path}
\lambda(i)=\begin{cases}
            e^r(i-e_1)  &  i_1>0\\
            e^b(i-e_2)  &  i_2>0.
           \end{cases}
\end{equation}
If $t_0 \in \mathbb{Z}/q \mathbb{Z}$ satisfies     
   \begin{equation} 
        \label{eq:t-zero}
w(\mathbf{0}) t_0= t-\sum_{i\in T\backslash \{\mathbf{0} \}}w(i) \lambda(i),
        \end{equation}
and we define $\lambda(\mathbf{0})= t_0$, then $\lambda$ is a path of degree $e_1+e_2$ with $\lambda  (e_1, e_1 + e_2)= e^r$ and $\lambda (e_2, e_1 + e_2)= e^b$.  
\end{enumerate}
\end{lem}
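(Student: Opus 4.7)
The proof unpacks the segment definition $\mu(m,n)(i) = \mu(m+i)$ (for $i \in T(n-m)$) together with the vertex characterization \eqref{eq:trace} and the hypothesis $s(e^b) = s(e^r)$.

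For part (1), I would translate $\mu(e_1, e_1+e_2) = e^r$ directly: this says $\mu(e_1 + i) = e^r(i)$ for $i \in T(e_2) = T \cup (T+e_2)$, equivalently $\mu(k) = e^r(k - e_1)$ for $k \in (T+e_1) \cup (T+e_1+e_2)$. The identity $\mu(e_2, e_1+e_2) = e^b$ similarly forces $\mu(k) = e^b(k - e_2)$ on $(T+e_2) \cup (T+e_1+e_2)$, giving \eqref{eq:mu-path}. Because $T$ is hereditary and $\mathbf{0} \in T$, the set $T \setminus ((T+e_1) \cup (T+e_2))$ collapses to $\{\mathbf{0}\}$, so $\mu(\mathbf{0})$ is the only value not already determined by \eqref{eq:mu-path}. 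Equation \eqref{eq:mu-zero} then follows by applying \eqref{eq:trace} to the vertex $r(\mu) = \mu|_T$ and isolating the $i = \mathbf{0}$ term.

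For part (2), I first verify \eqref{eq:lambda-path} is well-defined on the overlap $\{i \in T(e_1+e_2) : i_1 > 0 \text{ and } i_2 > 0\}$. Setting $j := i - e_1 - e_2$, heredity of $T$ gives $j \in T$; identifying $s(e^b)(j)$ with $e^b(j + e_1)$ and $s(e^r)(j)$ with $e^r(j + e_2)$, the hypothesis $s(e^b) = s(e^r)$ yields $e^b(i - e_2) = e^r(i - e_1)$, establishing well-definedness. After defining $\lambda(\mathbf{0}) := t_0$ via \eqref{eq:t-zero}, the segment identities $\lambda(e_1, e_1+e_2) = e^r$ and $\lambda(e_2, e_1+e_2) = e^b$ follow by direct substitution into \eqref{eq:lambda-path}. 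The vertex conditions on $\lambda|_{T+m}$ split by $m \in \{\mathbf{0}, e_1, e_2, e_1+e_2\}$: the case $m = \mathbf{0}$ is exactly \eqref{eq:t-zero}; for $m = e_1$, the map $i \mapsto \lambda(i + e_1) = e^r(i)$ recovers the vertex $r(e^r)$; for $m = e_2$ analogously $\lambda|_{T+e_2} = r(e^b)$; and for $m = e_1 + e_2$, $\lambda(i + e_1 + e_2) = e^r(i + e_2) = s(e^r)(i) = s(e^b)(i)$, again a vertex.

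The main obstacle is simply the bookkeeping of translation indices and the various overlap cases rather than any deeper difficulty; the only essential piece of mathematics beyond the segment definition is the well-definedness check, which is precisely where the source-matching hypothesis $s(e^b) = s(e^r)$ is used.
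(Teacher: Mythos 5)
Your proposal is correct and follows essentially the same route as the paper's proof: unpack the segment identities to get \eqref{eq:mu-path} and \eqref{eq:lambda-path}, use $s(e^b)=s(e^r)$ on the overlap $T+e_1+e_2$ for well-definedness, and check the vertex condition \eqref{eq:trace} at the four translates $m\in\{\mathbf{0},e_1,e_2,e_1+e_2\}$. Your added observation that heredity forces $T\setminus((T+e_1)\cup(T+e_2))=\{\mathbf{0}\}$ is correct and a nice (if not strictly necessary) clarification of why only $\mu(\mathbf{0})$ remains undetermined.
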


\begin{proof}
Take $i \in (T+ e_1) \cup (T + e_1 +e_2)$.  Then $i_1 >0$, so $i-e_1 \in T \cup (T+e_2)$  Hence we have
\[ \mu(i)= \mu((i - e_1) + e_1)= \mu (e_1, e_1+e_2)(i-e_1)= e^r(i-e_1). \]
Similarly take $i \in (T+ e_2)  \cup (T + e_1 +e_2)$.  Then $i_2 >0$, so $i-e_2 \in T \cup (T+e_1)$  Hence we have
\[ \mu(i)= \mu((i - e_2) + e_2)= \mu (e_2, e_1+e_2)(i-e_2)= e^b(i-e_2). \]
Since $\mu|_T$ is a vertex Equation~\eqref{eq:trace} shows that
\[ t=  \sum_{i \in T} w(i) \mu(i)=  w(\mathbf{0}) \mu(\mathbf{0}) + \sum_{i \in T \backslash \{\mathbf{0}\}} w(i) \mu(i). \]
Therefore $\mu(\mathbf{0})$ satisfies Equation~\eqref{eq:mu-zero}.

To show that $\lambda$ is well-defined function we consider the case when $i \in T(e_1+e_2)  \backslash \{ \mathbf{0}\}$ satisfies both definitions. So suppose $i \in T + e_1+e_2$, which is when $i$ satisfies $i_1 >0$ and $i_2>0$ so both definitions apply.  Then
\begin{align*}
\lambda(i)= e^r(i-e_1) &=e^r(i-e_1-e_2+e_2)=s(e^r)(i-e_1-e_2)\\
                 &=s(e^b)(i-e_1-e_2)=e^b(i-e_2-e_1+e_1)=e^b(i-e_2)
\end{align*}
therefore $\lambda$ is well-defined.  To prove that $\lambda$ is a path of degree $e_1+e_2$ we must show that $\lambda|_{T+m}$ is a path for all $\mathbf{0} \leq m \leq e_1+e_2$.  Since $\lambda|_T= \lambda(\mathbf{0})= t_0$ satisfies Equation~\eqref{eq:t-zero} $\lambda|_T$ is a vertex. Since $\lambda|_{T+e_1}= \lambda(e_1, e_1)=r(e^r)$, $\lambda|_{T+e_2}=\lambda(e_2, e_2)=r(e^b)$, and $\lambda|_{T+e_1+e_2}=\lambda(e_1 + e_2, e_1 + e_2)=s(e^r)=s(e^b)$ $\lambda |_{T+m}$ is a vertex for every $m \in \{\mathbf{0}, e_1, e_2, e_1+e_2 \}$. Therefore $\lambda$ is a path of degree $e_1+e_2$.   Observe that
\[ \lambda(e_1, e_1+e_2)(i)= \lambda(i +e_1)= e^r(i+e_1-e_1)=e^r(i)\]
and
\[ \lambda(e_2, e_1+e_2)(i)= \lambda(i +e_2)= e^b(i+e_2-e_2)=e^b(i).\]
\end{proof}

\begin{proof}[Proof of Theorem~\ref{thm:coalign-invert}]
Suppose $w(\mathbf{0})$ is invertible in $\mathbb{Z}/q \mathbb{Z}$ and $(e^b,e^r)\in\Lambda^{e_1}\times\Lambda^{e_2}$ satisfies $s(e^b)=s(e^r)$.  Let $\lambda: T(e_1+e_2)\backslash \{\mathbf{0}\} \to \mathbb{Z}/q \mathbb{Z}$ be the function satisfying Equation~\eqref{eq:lambda-path} and 
\[ \lambda(\mathbf{0}):= t_0 := w(\mathbf{0})^{-1} \Big( t-\sum_{i\in T\backslash \{ \mathbf{0}\}}w(i) \lambda(i) \Big). \]
Then by Lemma~\ref{Lem-for-Thm}~\eqref{Lem-exist} $\lambda$ is a path of degree $e_1+e_2$ with $\lambda  (e_1, e_1 + e_2)= e^r$ and $\lambda (e_2, e_1 + e_2)= e^b$.  Define $f^{b}:=\lambda(\mathbf{0},e_1)$ and $f^{r}:= \lambda(\mathbf{0},e_2)$. Then $(f^b, f^r) \in \Lambda^{e_1}\times\Lambda^{e_2}$ and $f^{r}e^{b}=f^{b}e^{r}=\lambda$.  Suppose there exists $(g^b, g^r) \in \Lambda^{e_1}\times\Lambda^{e_2}$ and $g^{r}e^{b}=g^{b}e^{r}=\mu$, say.  To show that $f^b=g^b$ and $f^r=g^r$ it suffices to show that $\lambda=\mu$.  Observe that by Lemma~\ref{Lem-for-Thm}~\eqref{Lem-unique} for $i \neq \mathbf{0}$ $\lambda(i)=\mu(i)$ so we must show that $\lambda(\mathbf{0})=\mu(\mathbf{0})$.   Equation~\eqref{eq:mu-zero} is satisfied by $\mu(\mathbf{0})$ and since $w(\mathbf{0})$ is invertible $\mu(\mathbf{0})= w(\mathbf{0})^{-1}  \Big(t-\sum_{i\in T\backslash \{\mathbf{0}\}}w(i)v(i) \Big)= \lambda(\mathbf{0})$.

Conversely, suppose that $w(\mathbf{0})$ not invertible in $\mathbb{Z}/q \mathbb{Z}$ and $(e^b,e^r)\in\Lambda^{e_1}\times\Lambda^{e_2}$ satisfy $s(e^b)=s(e^r)$. Then either there is no $t_0$ that satisfies Equation~\eqref{eq:t-zero}, or Equation~\eqref{eq:t-zero} has more than one solution.  Suppose first that there does not exist $t_0$ that satisfies Equation~\eqref{eq:t-zero}.  Then by Lemma~\ref{Lem-for-Thm}~\eqref{Lem-unique} there does not exist a path $\mu$ of degree $e_1 +e_2$ with $\mu (e_1, e_1 + e_2)= e^r$ and $\mu (e_2, e_1 + e_2)= e^b$ because if there were then $\mu(\mathbf{0})$ would satisfy Equation~\eqref{eq:t-zero}.  Hence there does not exist a pair $(f^b,f^r)\in\Lambda^{e_1}\times\Lambda^{e_2}$ such that $f^be^r=f^re^b$.   Therefore $\Lambda$ is not $1$-coaligned.  

Next suppose there exist $t_1, t_2$ which both satisfy Equation~\eqref{eq:t-zero}.  Then applying Lemma~\ref{Lem-for-Thm}~\eqref{Lem-exist} twice there are two paths $\lambda_1, \lambda_2$ of degree $e_1+e_2$ such that $\lambda_1 (e_1, e_1 + e_2)= e^r= \lambda_2 (e_1, e_1 + e_2)$, $\lambda_1 (e_2, e_1 + e_2)= e^b=\lambda_2 (e_2, e_1 + e_2)$ and $\lambda_1(\mathbf{0})= t_1 \neq t_2= \lambda_2(\mathbf{0})$. Define $f^{b}:=\lambda_1(\mathbf{0},e_1)$, $f^{r}:= \lambda_1(\mathbf{0},e_2)$, $g^{b}:=\lambda_2(\mathbf{0},e_1)$, and $g^{r}:= \lambda_2(\mathbf{0},e_2)$. Then $(f^b, f^r), (g^b, g^r) \in \Lambda^{e_1}\times\Lambda^{e_2}$ such that$f^{r}e^{b}=f^{b}e^{r}$ and $g^{r}e^{b}=g^{b}e^{r}$.  Observe that $f^r \neq g^r$ since $f^r(\mathbf{0})= \lambda_1(\mathbf{0}) \neq  \lambda_2(\mathbf{0})=g^r(\mathbf{0})$. Therefore $\Lambda$ is not $1$-coaligned.
\end{proof}

\begin{defi}
Let $(T,q,t,w)$ be basic data.  The rule \emph{$w$ has three invertible corners} if $w(\mathbf{0})$, $w(c_1e_1)$, and $w(c_2e_2)$ are all invertible in $\Z/q\Z$ (implicitly demanding that $c_1 \geq 1$ and $c_2 \geq 1$).
\end{defi}

\begin{cor} \label{TFAE}
Let $\Lambda(T,q,t,w)$ be a $2$-graph.  Then the following are equivalent:
\begin{enumerate}
  \item  the maps $\sigma^{e_1}$ and $\sigma^{e_2}$ $*$-commute,
  \item $\Lambda(T,q,t,w)$ is $1$-coaligned,
  \item the rule $w$ has three invertible corners.
\end{enumerate}
\end{cor}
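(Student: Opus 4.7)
The plan is to assemble this corollary from the two main theorems of the section, namely Theorem~\ref{thm:SC-classify} and Theorem~\ref{thm:coalign-invert}, which together supply the full cycle of equivalences.

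First, for (1) $\Leftrightarrow$ (2) I would simply invoke Theorem~\ref{thm:SC-classify} with $k=2$: the one-sided shifts $\sigma^{e_1}$ and $\sigma^{e_2}$ on $\Lambda^{\infty}$ $*$-commute if and only if $\Lambda$ is $1$-coaligned. The hypothesis needed is that $\Lambda(T,q,t,w)$ has no sources, which is part of the output of the construction in \cite[Theorem 3.4]{PRW09}.

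For (2) $\Leftrightarrow$ (3) the key observation is that in order to construct the $2$-graph $\Lambda(T,q,t,w)$ from basic data in the first place, the basic data is assumed to have invertible corners, meaning $w(c_1 e_1)$ and $w(c_2 e_2)$ are already invertible in $\mathbb{Z}/q\mathbb{Z}$. So the condition that $w$ has three invertible corners is equivalent to the single extra condition that $w(\mathbf{0})$ is invertible. Theorem~\ref{thm:coalign-invert} says exactly that, under the standing invertible-corners hypothesis, $\Lambda(T,q,t,w)$ is $1$-coaligned if and only if $w(\mathbf{0})$ is invertible. Chaining these gives (2) $\Leftrightarrow$ (3).

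There is no real obstacle here; the corollary is a packaging statement, and the substantive content has already been supplied by the two preceding theorems. The only care needed is to note that the standing invertible-corners hypothesis is in force (so that $\Lambda(T,q,t,w)$ is in fact a $2$-graph, and in particular $w(c_1 e_1)$ and $w(c_2 e_2)$ can be absorbed into the reformulation in (3)), and that $\Lambda(T,q,t,w)$ has no sources so Theorem~\ref{thm:SC-classify} applies.
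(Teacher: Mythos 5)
Your proposal is correct and follows exactly the paper's own argument: (1) $\Leftrightarrow$ (2) by Theorem~\ref{thm:SC-classify} and (2) $\Leftrightarrow$ (3) by Theorem~\ref{thm:coalign-invert}. The extra remarks about the standing invertible-corners hypothesis and the no-sources condition are sensible bookkeeping that the paper leaves implicit.
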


\begin{proof}
Theorem~\ref{thm:SC-classify} shows (1) if and only if (2).  Theorem~\ref{thm:coalign-invert} shows (2) if and only if (3).
\end{proof}

\begin{exams}
\begin{enumerate}
\item Given basic data $(T,q,t,w)$ and invertible corners, whenever $w(\mathbf{0})=1$, the shift maps will $*$-commute.
\item Given any alphabet of prime cardinality $q$, for any rule with $w(\mathbf{0}) \neq 0$, the shift maps will $*$-commute.
\end{enumerate}
\end{exams}

\begin{rem}
It is curious that the condition for $*$-commuting is exactly the condition in  \cite[Theorem 6.1]{PRW09} for $C^*(\Lambda)$ to be simple and purely infinite.
\end{rem}

\section{Full shift spaces} \label{FSS}

Let $A$ be a finite alphabet.  Let $A^n$ denote the words of length $n$, let $A^* :=\bigcup_{n \geq 1} A^n$, and let $A^{\mathbb{N}}$ denote the one-sided infinite sequence space of elements in $A$, which is compact by Tychonoff's Theorem.  The cylinder sets $Z(\mu) := \{ x \in A^{\mathbb{N}}: x_1 \cdots x_{|\mu|}= \mu \}$ for $\mu \in A^*$ form a basis of clopen sets.  Let $\sigma:A^{\mathbb{N}} \to A^{\mathbb{N}}$ defined by $\sigma(x_1 x_2 x_3 \cdots)= x_2 x_3 \cdots$ be the shift map.

A \emph{block map} is a function $d: A^n \to A$ for some $n \in \mathbb{N}$.  For any block map $d$ we define $\tau_d: A^{\mathbb{N}} \to A^{\mathbb{N}}$ by $\tau_d(x)_i= d(x_i \cdots x_{i+n-1})$.  We call $\tau_d$ a \emph{sliding block code}.  A function $\phi$ is continuous and commutes with $\sigma$ if and only if $\phi=\tau_d$ for some $d$.  This is usually proved for two-sided shifts as in  \cite{LM95} and it is well-known that is holds for the one-sided case \cite[page 461]{LM95}.  We give a complete proof in \cite[Lemma 3.3.3 and Lemma 3.3.7]{pW10}.

\begin{exam} \label{bar}
Let $A=\{0,1\}$ and define $d:A \to A$ by $d(0)=1$ and $d(1)=0$.  For $x \in A^{\mathbb{N}}$ denote $\tau_d(x)=\overline{x}$.  We know $\tau_d$ commutes with $\sigma$ by definition.  Let $y,z \in A^{\mathbb{N}}$ be such that $\sigma(y)=\tau_d(z)$.  Since $\tau_d$ is bijective, observe that $\overline{y}$ is the unique element in $A^{\mathbb{N}}$ such that $\tau_d(\overline{y})=y$.  We also have that $\tau_d(\sigma(\overline{y}))=\sigma(\tau_d(\overline{y}))= \sigma(y) = \tau_d(z)$ and since $\tau_d$ is bijective $\sigma(\overline{y})=z$.  So $\tau_d$ $*$-commutes with $\sigma$.
\end{exam}

\begin{exam} \label{sigma}
 Let $a_1, a_2  \in A$ such that $a_1 \neq a_2$ and $w \in A^{\mathbb{N}}$.  Observe that $\sigma(a_1 w)=w=\sigma(a_2 w)$ and $a_1 w \neq a_2 w$.  Therefore $\sigma$ does not $*$-commute with itself.  
\end{exam}

Example~\ref{sigma} indicates that the condition $i \neq j$ in Section~\ref{kgraph} is necessary.

Following the standard accepted definitions of permutive in \cite{Hed69} and right permutive in \cite{BK99} we say that the block map $d: A^n \to A$ is \emph{left permutive} if for each fixed $x_1 \cdots x_{n-1} \in A^{n-1}$ the function $r_d^{x_1 \cdots x_{n-1}}: A \to A$ defined by $r_d^{x_1 \cdots x_{n-1}}(a)=d(ax_1 \cdots x_{n-1})$ is bijective.

\begin{exam} \label{counterex}
Let $A = \{0,1,2,3 \}$ and define $d:A^2 \to A$ by
\begin{align*}
&d(00) = 0  &d(01) &= 0  &d(02) &= 1  &d(03) &= 1\\
&d(10) = 3  &d(11) &= 3  &d(12) &= 2  &d(13) &= 2\\
&d(20) = 2  &d(21) &= 2  &d(22) &= 3  &d(23) &= 3\\
&d(30) = 1  &d(31) &= 1  &d(32) &= 0  &d(33) &= 0.
\end{align*}  
The block map $d$ is left permutive.
\end{exam}

\begin{exam} \label{modn}
For this example, all addition is modulo $n$.  Let $A=\{0,1,\cdots, n-1\}$ and define $d:A^n \to A$ by $d(a_1 \cdots a_n)= a_1+ \cdots + a_n \pmod{n}$.  Fix $x_1 \cdots x_{n-1} \in A^{n-1}$ and let $x:= x_1+ \cdots + x_{n-1}$.  To see that $r_d$ is injective, let $a_1, a_2 \in A$ and suppose $r_d^{x_1 \cdots x_{n-1}}(a_1)=r_d^{x_1 \cdots x_{n-1}}(a_2)$.  Then
 \begin{align*}
a_1+x &=d(a_1 x_1 \cdots x_{n-1})= r_d^{x_1 \cdots x_{n-1}}(a_1) \\
           &=r_d^{x_1 \cdots x_{n-1}}(a_2)= d(a_2 x_1 \cdots x_{n-1})=a_2+x,
 \end{align*}
therefore $a_1=a_2$.  Let $a \in A$.  Then we have $r_d^{0_1 \cdots 0_{n-1}}(a)=a$.  Therefore $d$ is left permutive.
\end{exam}

\begin{thm} \label{refiffstar}
The block map $d: A^n \to A$ is left permutive if and only if the induced sliding block code $\tau_d: A^{\mathbb{N}} \to A^{\mathbb{N}}$ $*$-commutes with the shift map $\sigma$.
\end{thm}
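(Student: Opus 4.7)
My plan is to reduce the $*$-commuting condition for $\sigma$ and $\tau_d$ to the bijectivity of the family of maps $r_d^{z_1 \cdots z_{n-1}}$ that defines left permutivity. Given $(y,z) \in A^{\mathbb{N}} \times A^{\mathbb{N}}$ with $\sigma(y) = \tau_d(z)$, I am looking for $x \in A^{\mathbb{N}}$ with $\sigma(x) = z$ and $\tau_d(x) = y$. I will first note that $\sigma(x) = z$ forces $x$ to take the form $x = a z_1 z_2 z_3 \cdots$ for some letter $a = x_1 \in A$, so the real question is whether $a$ is determined uniquely by $(y,z)$.

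Next, I plan to compute $\tau_d(x)_i$ for $i \geq 2$ directly: $\tau_d(x)_i = d(x_i \cdots x_{i+n-1}) = d(z_{i-1} \cdots z_{i+n-2}) = \tau_d(z)_{i-1}$, and invoking the standing hypothesis $\sigma(y) = \tau_d(z)$ this equals $y_i$. So all of the coordinates of $\tau_d(x)$ beyond the first are automatically correct, and the condition $\tau_d(x) = y$ collapses to the single scalar equation $d(a z_1 \cdots z_{n-1}) = y_1$, i.e., $r_d^{z_1 \cdots z_{n-1}}(a) = y_1$. Ordinary commutativity $\tau_d \sigma = \sigma \tau_d$ is automatic, as the paper notes, so this is all that remains to be checked.

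The two implications will then drop out. For the forward direction, if $d$ is left permutive then $r_d^{z_1 \cdots z_{n-1}}$ is a bijection, so the scalar equation has a unique solution $a$, giving the unique $x$. For the converse, I will fix any $z_1 \cdots z_{n-1} \in A^{n-1}$ and any target $b \in A$, extend $z_1 \cdots z_{n-1}$ arbitrarily to some $z \in A^{\mathbb{N}}$, and set $y := b\,\tau_d(z)_1 \tau_d(z)_2 \cdots$; this forces $\sigma(y) = \tau_d(z)$, and the $*$-commuting hypothesis furnishes a unique $x = az$ with $r_d^{z_1 \cdots z_{n-1}}(a) = b$, establishing bijectivity of $r_d^{z_1 \cdots z_{n-1}}$. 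The only delicate step is the index shift showing $\tau_d(x)_i = y_i$ for $i \geq 2$; once that is pinned down, everything else is a one-line consequence.
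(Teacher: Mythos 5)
Your proposal is correct and follows essentially the same route as the paper: the key index-shift computation $\tau_d(az)_i=\tau_d(z)_{i-1}=y_i$ for $i\geq 2$, which reduces everything to the single equation $r_d^{z_1\cdots z_{n-1}}(a)=y_1$, is exactly the engine of the paper's argument in both directions. The only cosmetic difference is that your converse extracts injectivity and surjectivity of $r_d^{z_1\cdots z_{n-1}}$ simultaneously from one constructed pair $(y,z)$ per target $b$, whereas the paper runs two separate arguments using the uniqueness and existence clauses of $*$-commutation.
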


\begin{proof}
By definition $\tau_d$ commutes with $\sigma$.  Suppose we have $y,z \in A^{\mathbb{N}}$ such that $\sigma (y) = \tau_d(z)$.  Since $d$ is left permutive there exists a unique $x_1 \in A$ such that $r_d^{z_1 \cdots z_{n-1}}(x_1)= d(x_1 z_1 \cdots z_{n-1})=y_1$.  Notice that $y_{i+1}=\sigma(y)_i= \tau_d(z)_i= \tau_d(x_1 z)_{i+1}$.  So we have
\[
\tau_d(x_1 z)= d(x_1 z_1 \cdots z_{n-1}) \tau_d(x_1 z)_2 \tau_d(x_1 z)_3 \cdots= y_1 y_2 y_3 \cdots= y
\]
and $\sigma(x_1 z)=z$.  To see that $x_1z$ is unique suppose there exists $w \in A^{\mathbb{N}}$ such that $\tau_d(w)=y$ and $\sigma(w)= z$.  Then $w= az$ for some $a \in A$.  Notice that $ d(az_1 \cdots z_{n-1})= \tau_d(az)_1 =y_1=d(x_1z_1 \cdots z_{n-1})$.  Since $d$ is left permutive $a=x_1$.  Therefore $\tau_d$ $*$-commutes with $\sigma$.

Conversely, fix $x_1 \cdots x_{n-1} \in A^{n-1}$.  Suppose for $a_1, a_2 \in A$ we have $r_d^{x_1 \cdots x_{n-1}}(a_1)=r_d^{x_1 \cdots x_{n-1}}(a_2)$.  Then let $z \in Z(x_1 \cdots x_{n-1})$ and observe that
\begin{align*}
\tau_d(a_1 z)_1 &= d(a_1 z_1, \cdots z_{n-1})= r_d^{x_1 \cdots x_{n-1}}(a_1)\\
                          &=r_d^{x_1 \cdots x_{n-1}}(a_2)= d(a_2 z_1, \cdots z_{n-1})= \tau_d(a_2 z)_1.
\end{align*}
For $i \geq 2$ we have $\tau_d(a_1 z)_i= \tau_d(z)_{i-1}= \tau_d(a_2 z)_i$.  So $\tau_d(a_1 z)= \tau_d(a_2 z)$ and $\sigma(a_1 z)=z= \sigma(a_2 z)$. Since $\tau_d$ $*$-commutes with $\sigma$ we have $a_1 z= a_2 z$.  Therefore $r_d^{x_1 \cdots x_{n-1}}$ is injective.  Now let $a \in A$.  Suppose $z \in Z(x_1 \cdots x_{n-1})$ and define $w= \tau_d(z)$.  Then $aw, z \in A^{\mathbb{N}}$ satisfy $\sigma(aw)= \tau_d(z)$.  Since $\tau_d$ and $\sigma$ $*$-commute there exists a unique $v \in A^{\mathbb{N}}$ such that $\sigma(v)=z$ and $\tau_d(v)=aw$.  Since $\sigma(v)=z$, there exists $b \in A$ such that $v=bz$.  So we have $a= \tau_d(v)_1= \tau_d(bz)_1= d(bz_1 \cdots z_{n-1})= d(bx_1 \cdots x_{n-1})$.  So $b \in A$ such that $r_d^{x_1 \cdots x_{n-1}}(b)= d(bx_1 \cdots x_{n-1})= a$.  Therefore $d$ is left permutive.
\end{proof}

Lemma 6.3 of \cite{BK99} gives criteria for a finite-to-one surjective sliding block code $\phi$ to be a local homeomorphism.

\bibliographystyle{amsplain}
\thebibliography{99}
\bibitem{AR} V. Arzumanian and J. Renault, \emph{Examples of pseudogroups and their $C^*$-algebras},  Operator Algebras and Quantum Field Theory (Rome, 1996), International. Press, Cambridge, MA. 1997, 93--104.

\bibitem{BK99} M. Boyle and B. Kitchens, \emph{Periodic points for onto cellular automata}, Indag. Mathem., (N.S.) \textbf{10} (1999), 483--493.


\bibitem{ER07} R. Exel and J. Renault, \emph{Semigroups of local homeomorphisms and interaction groups}, Ergod. Th. \& Dynam. Sys. \textbf{27} (2007), 1737--1771.

\bibitem{Hed69} G. A. Hedlund, \emph{Endomorphisms and automorphisms of the shift dynamical systems}, Math. Systems Theory \textbf{3} (1969), 320--375.

\bibitem{KP00} A. Kumjian and D. Pask, \emph{Higher rank graph $C^{*}$-algebras}, New York J. Math. \textbf{6} (2000), 1--20.

\bibitem{Led78} F. Ledrappier, \emph{Un champ markovian peut \^{e}tre d'entropie nulle et malang\'{e}nt}, C.R. Acad. Sci. Paris S\'{e}r. I Math. \textbf{287} (1978), 561--562.

\bibitem{LM95} D. Lind and B. Marcus, \emph{An introduction to symbolic dynamics and coding}, Cambridge University Press, Cambridge, 1995.

\bibitem{PRW09} D. Pask, I. Raeburn and N. A. Weaver, \emph{A family of $2$-graphs arising from two-dimensional subshifts}, Ergod. Th. \& Dynam. Sys.  \textbf{29} (2009), 1613--1639.

\bibitem{RS07} D. I. Roberston and A. Sims, \emph{Simplicity of $C^{*}$-algebras associated to higher-rank graphs}, Bull. Lond. Math. Soc. \textbf{37} (2007), 337--344.

\bibitem{RS99} G. Robertson and T. Steger, \emph{Affine buildings, tiling systems and higher rank Cuntz-Krieger algebras}, J. Reine Angew. Math. \textbf{513} (1999), 115--144.

\bibitem{pW10} P. N. Willis, \emph{$C^*$-algebras of labeled graphs and $*$-commuting endomorphisms}, Thesis, University of Iowa, 2010.

\end{document}